\documentclass[preprint,12pt]{elsarticle}
\usepackage{booktabs}
\usepackage{amsfonts}
\usepackage{amsmath}
\usepackage{amssymb}
\usepackage{amsthm}
\usepackage{color}
\usepackage{graphicx}
%
\setcounter{MaxMatrixCols}{30}
\providecommand{\U}[1]{\protect\rule{.1in}{.1in}}
\newtheorem*{thm}{Theorem}
\newtheorem*{thm1}{Theorem 1}

\newtheorem*{thm2}{Theorem 2}

\newtheorem*{lem}{Lemma}

\newtheorem*{prop}{Proposition}

\newdefinition{rmk}{Remark}
\newproof{pf}{\bf{Proof}}
\newproof{pot}{Proof of Theorem \ref{thm2}}

\def\Xint#1{\mathchoice
   {\XXint\displaystyle\textstyle{#1}}%
   {\XXint\textstyle\scriptstyle{#1}}%
   {\XXint\scriptstyle\scriptscriptstyle{#1}}%
   {\XXint\scriptscriptstyle\scriptscriptstyle{#1}}%
   \!\int}
\def\XXint#1#2#3{{\setbox0=\hbox{$#1{#2#3}{\int}$}
     \vcenter{\hbox{$#2#3$}}\kern-.5\wd0}}
\def\dashint{\Xint-}
\begin{document}

\begin{frontmatter}

\title{On the corona theorem on smooth curves}
\author[esc]{J.M. Enr\'{i}quez-Salamanca\corref{cor1}}
\ead{enrique.desalamanca@uca.es}
\author[uca]{Mar\'{i}a J. Gonz\'{a}lez\corref{cor2}\fnref{fn2}}
\ead{majose.gonzalez@uca.es}
\cortext[cor2]{Corresponding author}
\cortext[cor1]{Principal corresponding author}
\address[uca]{Department of Mathematics, University of C\'{a}diz, Puerto Real, 11510, Spain}
\address[esc]{Department of Mathematics, University of C\'{a}diz, C\'{a}diz, 11002, Spain}
\fntext[cor1]{This author is supported by Grant MTM-2014-51824-P.}
\fntext[fn2]{This author is supported by Grant MTM-2014-51824-P and MTM-2017-85666-P.}

\begin{abstract}
We prove the corona theorem for domains whose boundary lies in certain smooth quasicircles. These curves, which are not necessarily Dini-smooth, are defined by quasiconformal mappings whose complex dilatation verifies certain conditions. Most importantly we do not assume any ``thickness'' condition on the boundary domain. In this sense, our results complement those obtained by Garnett and Jones (1985) and C. Moore on $C^{1+\alpha}$ curves (1987).

\end{abstract}
\end{frontmatter}
\section{Introduction}
\label{intro}
\noindent Let $\Omega\subset \mathbb{C}$ be a domain in the complex plane and let $H^{\infty}(\Omega)$ be the space of bounded analytic functions in $\Omega$.\\

\noindent The first corona theorem was proved by Carleson for simply connected domains \cite{carleson1}. Denote by $\mathbb{D}$ the open unit disk.

\begin{thm}[\bf{Carleson}]
Let $f_1(z),\ldots, f_n(z)$ be given functions in $H^{\infty}(\mathbb{D})$ and verifying that
$$|f_1(z)|+|f_2(z)|+\ldots |f_n(z)|\geq \delta >0,$$

\noindent for some $0<\delta \leq 1/2$. Suppose that $\|f_k\|_{\infty}\leq 1$, $k=1,2,\ldots,n$. Then, there exist $\{g_k\}_{k=1}^{n} \in  H^{\infty}(\mathbb{D})$ so that:
\begin{equation}
  \sum_{k=1}^{n}f_{k} g_{k}=1 \;\;\text{and}\;\; \|g_{k}\|_{\infty}\leq C(n,\delta).\nonumber
\end{equation}
\end{thm}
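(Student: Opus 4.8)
\noindent The plan is to follow the classical route -- Carleson's original idea, streamlined by H\"ormander and Wolff -- which converts the multiplicative identity $\sum_k f_kg_k=1$ into a $\bar\partial$-problem solvable with uniform bounds. First, by a normal-families/approximation argument (apply the result to the dilates $f_k(rz)$ and let $r\to1$) it suffices to treat the case where $f_1,\dots,f_n$ are holomorphic on a neighbourhood of $\overline{\mathbb D}$, provided every estimate depends only on $n$ and $\delta$. Put $F=\sum_{k=1}^n|f_k|^2$, so that $\delta^2/n\le F\le n$ on $\overline{\mathbb D}$ by hypothesis and the Cauchy--Schwarz inequality, and set $\varphi_k=\bar f_k/F\in C^\infty(\overline{\mathbb D})$. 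These satisfy $\sum_k f_k\varphi_k\equiv1$ and $\|\varphi_k\|_\infty\le\sqrt n/\delta$, and a direct differentiation gives $\bar\partial\varphi_k=\sum_l\overline{f_l'}\,h_{kl}$ with $h_{kl}=\delta_{kl}F^{-1}-\bar f_kf_lF^{-2}$, so $\|h_{kl}\|_\infty\le 2n/\delta^2$.

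\noindent Since the $\varphi_k$ are not holomorphic, I would correct them: look for
\[
g_k=\varphi_k+\sum_{l=1}^n\bigl(b_{kl}-b_{lk}\bigr)f_l ,
\]
where the $b_{kl}$ are to be found. For any choice, antisymmetry of $b_{kl}-b_{lk}$ together with $\sum_k f_k\varphi_k=1$ gives $\sum_k f_kg_k=1$; and since the $f_l$ are holomorphic, $\bar\partial g_k=\bar\partial\varphi_k+\sum_l(\bar\partial b_{kl}-\bar\partial b_{lk})f_l$. Choosing $b_{kl}$ to solve $\bar\partial b_{kl}=\varphi_k\,\bar\partial\varphi_l$ on $\mathbb D$ makes this vanish, because $\sum_l f_l\,\bar\partial\varphi_l=\bar\partial\bigl(\sum_l f_l\varphi_l\bigr)=0$; then each $g_k$ is holomorphic in $\mathbb D$, and if the $b_{kl}$ are bounded then $g_k\in H^\infty(\mathbb D)$ with $\|g_k\|_\infty\le\sqrt n/\delta+2n\max_{k,l}\|b_{kl}\|_\infty$, which is exactly the desired estimate. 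So the whole theorem reduces to one analytic fact: for $g=\varphi_j\,\bar\partial\varphi_k$ the equation $\bar\partial b=g$ admits a solution with $\|b\|_{L^\infty(\partial\mathbb D)}\le C(n,\delta)$ (a solution merely within bounded $L^\infty$-distance of $H^\infty$ is enough, since an $H^\infty$ summand can be absorbed into $g_k$ without destroying holomorphy or the identity $\sum_k f_kg_k=1$).

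\noindent For that -- the step I expect to be the main obstacle -- I would first record that the datum is well behaved. Writing $g=\sum_l\overline{f_l'}\,a_l$ with $\|a_l\|_\infty=\|\varphi_j h_{kl}\|_\infty\le C(n,\delta)$ and using the classical Littlewood--Paley fact that $|f_l'(z)|^2(1-|z|^2)\,dx\,dy$ is a Carleson measure of norm $\lesssim\|f_l\|_\infty^2\le1$, one obtains that $|g(z)|^2(1-|z|^2)\,dx\,dy$ is Carleson with norm $\le C(n,\delta)$, and, by Cauchy--Schwarz over Carleson boxes, that $|g(z)|(1-|z|^2)\,dx\,dy$ is as well. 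The heart of the matter is then Wolff's lemma: these two Carleson conditions force a solution of $\bar\partial b=g$ with $\mathrm{dist}_{L^\infty(\partial\mathbb D)}(b,H^\infty)\le C(n,\delta)$. Its proof passes to the dual side: using Green's theorem (the Littlewood--Paley identity, whose weight $\log\tfrac1{|z|}\asymp1-|z|$ near $\partial\mathbb D$ is what converts $|g|\,dx\,dy$ into the measure $|g|(1-|z|^2)\,dx\,dy$) one writes the pairing of $b$ against a test function $G\in H^1$ as an area integral over $\mathbb D$; Riesz-factorizing $G=G_1G_2$ with $\|G_i\|_{H^2}^2=\|G\|_{H^1}$ and integrating by parts so that $G_i'$ appears, one controls the two resulting terms by $\iint_{\mathbb D}|G_i'(z)|^2(1-|z|^2)\,dx\,dy\lesssim\|G_i\|_{H^2}^2$ against the first Carleson condition and by the Carleson embedding theorem for $|G_i|^2$ against the second. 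Feeding the resulting bounded solutions $b_{kl}$ back into the formula for $g_k$ finishes the proof; I expect the careful matching of the log-weight in the Green identity against the $(1-|z|^2)$ weights, and the bookkeeping of how the constants depend on $n$ and $\delta$, to be the delicate points.
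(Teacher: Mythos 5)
The paper does not prove this statement: it is quoted verbatim as Carleson's theorem and attributed to \cite{carleson1}, serving only as background for the authors' own results on curves. So there is no ``paper proof'' to match yours against. On its own merits, your outline is the standard H\"ormander--Wolff argument and it is correct: the reduction to data holomorphic near $\overline{\mathbb D}$, the smooth solution $\varphi_k=\bar f_k/F$ with the bounds $\delta^2/n\le F\le n$, the antisymmetric correction $g_k=\varphi_k+\sum_l(b_{kl}-b_{lk})f_l$ with $\bar\partial b_{kl}=\varphi_k\bar\partial\varphi_l$, and Wolff's $\bar\partial$-lemma proved by duality against $H^1$ via Riesz factorization, Green's theorem and the Carleson embedding theorem are all the right ingredients, and your two Carleson-measure claims for $|g|^2(1-|z|^2)\,dx\,dy$ and $|g|(1-|z|^2)\,dx\,dy$ do follow as you say from the Littlewood--Paley fact for $f_l'$ and Cauchy--Schwarz over boxes. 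Two small points you should make explicit in a full write-up: (i) after solving the $\bar\partial$-problem you only control $g_k$ on $\partial\mathbb D$, so you need that $g_k$ is holomorphic in $\mathbb D$ and continuous up to the boundary (true in the smooth reduction) before the maximum principle yields $\|g_k\|_{H^\infty}\le C(n,\delta)$; and (ii) in the normal-families limit $r\to1$ you should note that the identity $\sum_k f_k(rz)g_k^{(r)}(z)=1$ passes to the limit by locally uniform convergence, so the limiting $g_k$ really solve the original problem. Neither is a gap in substance, only in bookkeeping.
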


\noindent The functions $\{f_{k}\}$ and $\{g_{k}\}$ are called corona data and corona solutions respectively, and $\delta$ and $n$ are the corona constants.\\

\noindent In his proof, Carleson introduced what is known as Carleson measures, a fundamental tool in complex and harmonic analysis.\\

\noindent The next breakthrough for infinitely connected domains \cite{carleson2} is also due to Carleson. He used the relation between interpolating sequences, boundary thickness and the Cauchy transform to prove the corona theorem for homogeneous Denjoy domains, that is, for domains with boundary $E \subset \mathbb{R}$ (Denjoy domain) such that:
\begin{equation}
|(x-r,x+r)\cap E|>\varepsilon_0 r \;\;\text{for all} \; x\in E \;\; \text{and all} \;r>0.\nonumber
\end{equation}
\noindent Newdelman extended this result to domains $\Omega=\mathbb{C}\backslash E$ where $E$ is a homogeneous set contained in a Lipschitz graph \cite{newdelman}. The idea was to divide $\Omega$ into two overlapping simply connected regions, $\tilde{\Omega}^+$ and $\tilde{\Omega}^-$. On each region, he used Carleson's simply connected result to obtain regional corona solutions by an iterating method. On each iteration, a particular $\bar{\partial}$ equation was solved to modify $\{g_j^{\pm}\}$ so that $\max |g_j^+(z)-g_j^-(z)|$ was reduced in the overlap of the regions. See more results on corona theorem in \cite{handy} and the references within. \\

\noindent The first result for domains $\Omega=\mathbb{C}\backslash E$ that did not assume the homogeneous condition on the set $E$ was proved by Garnett and Jones on Denjoy domains, that is $E\subset \mathbb{R}$ \cite{garnettandjones}.\\
\noindent Moore \cite{moore} extended the corona theorem for domains $\Omega=\mathbb{C}\backslash E$, with $E$ lying in a $C^{1+\alpha}$ curve. For that, he first proved that Cauchy integrals on a $C^{1+\alpha}$ curve behave locally like Cauchy integrals along straight lines and then used Garnett and Jones' solutions on Denjoy domains. \\

\noindent Moore's result was proved again in \cite{nuestro} by considering quasiconformal mappings. In fact, if $f$ is a conformal mapping from the upper plane $\mathbb{R}_{+}^2$ onto the complex plane, then $\Gamma=f(\mathbb{R})$ is a $C^{1+\alpha}$ curve if and only if $f$ extends to a global quasiconformal map whose dilatation $\mu$ satisfies that $|\mu|^2/|y|^{1+\varepsilon}dxdy$ is a Carleson measure relative to $\mathbb{R}$ \cite{nuestro2}. This characterization is then used to show that $H^{\infty}(\Omega)$ functions are close to $H^{\infty}$ functions on Denjoy domains and to obtain local solutions from the Garnett and Jones' solutions.\\
\noindent Both proofs of corona theorem for $C^{1+\alpha}$ curves (\cite{moore},\cite{nuestro}) can be extended to Dini-smooth curves with slight modifications.\\

\noindent A natural question would be to extend these results, where no condition on the homogeneity of the set $E$ is required, to more general curves, such as smooth curves, that is, to Jordan curves, $\Gamma$, for which there is a parametrization $f:\mathbb{R}\rightarrow \mathbb{C}$, with $f'$ continuous and $\neq 0$.\\

\noindent This paper presents the corona theorem for domains $\Omega=\mathbb{C}\backslash E$, where $E$ lies in certain smooth curves $\Gamma$. More precisely, we will consider quasicircles which are images of $\mathbb{R}$ under a global quasiconformal mapping of the the complex plane, $\rho$, whose complex dilatation, $\mu $, has compact support and verifies one of these two conditions:
\begin{enumerate}
\item The complex dilatation $\mu$ satisfies {\bf condition 1} if:
\begin{equation}
\label{condicion1}
\int_{0}\displaystyle{\frac{\mu^{*}(t)}{|t|}}\log{\left(\displaystyle{\frac{1}{|t|}}\right)}dt<\infty,
\end{equation}
\noindent where $\mu^{*}(t)=\text{esssup}\{|\mu(z)|:0<|\text{Im}(z)|<|t|\}$ is the monotonic majorant of $\mu$.
\item The complex dilatation $\mu$ satisfies {\bf condition 2} if:
\begin{equation}
\label{condicion2}
\int_{\mathbb{R}}\displaystyle{\frac{\sigma(y)}{|y|^{3/2}}}\;dy<\infty,
\end{equation}

\noindent where $\sigma(y)$ is defined a.a. $y\in \mathbb{R}$ as $\sigma(y)=\left(\int_{\mathbb{R}}|\mu(x+\text{i}y)|^2dx\right)^{1/2} $, and there exists $C>0$ so that:
\begin{equation}
\label{condicion3}
|\mu(z_0)|\lesssim \dashint_{|z-z_0|<C|\text{Im}(z_0)|}|\mu(z)|dx\;dy,\;\; \forall z_0 \in \mathbb{C}\backslash \mathbb{R}.
\end{equation}
\end{enumerate}

\noindent We will show that, in both cases, there exists $M>0$ such that the Teichm{\"u}ller-Wittich-Belinski integral

\begin{equation}
\label{condicion10}
\int_{\mathbb{C}}\displaystyle{\frac{|\mu(z+t)|}{|z|^2}}dx\;dy <M,
\end{equation}

\noindent for every $t\in \mathbb{R}$ and, therefore, $\Gamma=\rho(\mathbb{R})$ is a smooth curve (\cite{martio}, Corollary 1.6).\\

\noindent To prove our results, we will follow a similar argument as in \cite{nuestro2}. This approach, which had been previously developed by Semmes in \cite{SEMMES}, will allow us to relate $H^{\infty}(\Omega)$ to $H^{\infty}$ on Denjoy domains.\\

\begin{thm1}
\label{theorem1}
  Let $\rho$ be a global quasiconformal mapping of the complex plane, conformal at $\infty$ and with complex dilatation $\mu$ verifying either condition 1 or condition 2. Denote $\Gamma=\rho(\mathbb{R})$. Then, given a function $f\in L^{\infty}(\Gamma)$, the Cauchy integral $C_{\Gamma}(f)\in L^{\infty}(\mathbb{C})$  if and only if $C_{\mathbb{R}}(g)\in L^{\infty}(\mathbb{C})$, where $g$ is the pullback of $f$ under the quasiconformal mapping, $g=f \circ \rho$.
\end{thm1}

\noindent We can now state our main result on corona theorem for both sets of curves. For that, we consider domains $\Omega=\mathbb{C}\backslash E$ where $E$ is a compact set with positive length contained in a quasicircle $\Gamma=\rho(\mathbb{R})$, analytic at $\infty$, such that the complex dilatation of the quasiconformal mapping $\rho$ satisfies either (\ref{condicion1}) or (\ref{condicion2}) and (\ref{condicion3}).\\

\begin{thm2}
With the notation above, let $f_1,f_2,\ldots, f_n \in H^{\infty}(\Omega)$ so that $\delta \leq \max_{k}|f_{k}(\omega)|\leq 1$ for all $\omega \in \Omega$ and some $\delta>0$. Then, there exist $g_1,g_2,\ldots,g_n \in H^{\infty}(\Omega)$ such that $f_1 g_1+f_2 g_2+\ldots+f_n g_n=1$ on $\Omega$.
\end{thm2}
\noindent Note that no condition on the homogeneity of the set $E$ is assumed in Theorem 2.\\

\noindent The paper is structured as follows. In section 2, we review some basic definitions and facts. We prove theorem 1 in section 3 and theorem 2 in section 4. Finally, in section 5, an example of this sort of smooth curves which is not Dini-smooth is presented.\\

\section{Preliminaries}
\label{preliminaries}
\noindent Let us denote complex variables by $z=x+\text{i}y$ and $\omega=u+\text{i}v$. We shall use the following notation throughout this article: $\mathbb{D}=\{z:|z|<1\}$, $|E|$ represents the Lebesgue measure of any set $E$, $\delta_{\Gamma}(\omega)$ the distance from the point $\omega$ to the curve $\Gamma$, $\text{diam}(E)$ the diameter of a set $E$ and $H^{\infty}(\Omega)$ is the space of bounded analytic functions on $\Omega$. Also, we shall write  $\bar{\partial}=\partial/\partial \bar{z}=1/2(\partial_x+i\partial_y)$ and $\partial=\partial/\partial z=1/2(\partial_x-i\partial_y)$. For a square $Q$, we will denote by $l(Q)$ its length and we will use $x\lesssim y$ as shorthand for the inequality $x \leq Cy$ for some constant $C$.\\

\noindent Given a function $f$ on a rectifiable curve $\Gamma$, define its Cauchy integral $F(z)=C_{\Gamma}(f)(z)$ off $\Gamma$ by:

\begin{equation}
F(z)=\displaystyle{\frac{1}{2\pi\text{i}}}\int_{\Gamma}\displaystyle{\frac{f(\omega)}{\omega-z}}d\omega,\;\; z \notin \Gamma. \nonumber
\end{equation}

\noindent We define the jump of $F$ across $\Gamma$ at a point $z$, $j(F)(z)$, as $F_{+}(z)-F_{-}(z)$, where $F_{\pm}$ denote the boundary values of $F$. As the classical Plemelj formula states,

\begin{equation}
F_{\pm}(z)=\pm \displaystyle{\frac{1}{2}}f(z)+\displaystyle{\frac{1}{2}} P.V.\int_{\Gamma}\displaystyle{\frac{f(\omega)}{\omega-z}}d\omega,\;\; z\in \Gamma.\nonumber
\end{equation}

\noindent Hence, $F_{+}(z)-F_{-}(z)=j(F)(z)=f(z)$.\\

\noindent Consider $\rho$ a global quasiconformal mapping of the complex plane with complex dilatation $\mu$. Thus, $\rho$ is a homeomorphism with locally integrable distributional derivatives verifying that $\bar{\partial}\rho-\mu\partial \rho=0$, $\mu\in L^{\infty}(\mathbb{C})$ and $\|\mu\|_{\infty}<1$. Suppose that $\rho$ is conformal at $\infty$, with $\rho(\mathbb{R})=\Gamma$ a rectifiable quasicircle. Let $\Omega=\mathbb{C}\backslash E$, where $E \subset \Gamma$ is a compact set with positive length. Define $\Omega_0=\rho^{-1}(\Omega)$ and $E_0=\rho^{-1}(E)$. Note that $E_0$ is a compact set of positive measure (\cite{pommerenke}, Theorem 6.8). \\

\noindent Define the space:
$$H^{\infty}(\Omega_0,\mu)=\{f\circ \rho: f\in H^{\infty}(\Omega)\}.$$

\noindent Observe that if $g \in H^{\infty}(\Omega_0,\mu)$, then $\bar{\partial} f=0$ on $\Omega$ translates into $(\bar{\partial}-\mu\partial)g=0$ on $\Omega_0$ and the jump of $g$ across $E_0$ is given by $\text{j}(g)=\text{j}(f)\circ \rho$.\\

\noindent We review some facts which follow from Semmes' approach in \cite{SEMMES}. Let $f\in H^{\infty}(\Omega)$ and $g=f\circ \rho$. Consider the jump of $g$, $\text{j}(g)$, and set $\tilde{g}=C_{\mathbb{R}}(\text{j}(g))$. If we define $H=g-\tilde{g}$, then $\bar{\partial} H=\mu \partial g$ on $\Omega_0$ and since $H$ has no jump across $E_0$, we can consider that this equation holds on all $\mathbb{C}$ in the sense of distributions. For more details see \cite{nuestro2}. We can then apply Cauchy's formula to obtain:

\begin{equation}
H(z_0)=-\displaystyle{\frac{1}{\pi}}\int_{\mathbb{C}}\displaystyle{\frac{\bar{\partial}H}{z-z_0}}dx\;dy=-\displaystyle{\frac{1}{\pi}}\int_{\mathbb{C}}\displaystyle{\frac{\mu(z)\partial g(z)}{z-z_0}}dx\;dy\;\;\; \text{for all}\;\;z_0\in \mathbb{C}.\nonumber
\end{equation}

\noindent The modulus of continuity of a function $f$ on $\mathbb{R}$ is defined by:

$$\omega_f(\delta)=\text{sup}\{|f(x_1)-f(x_2)|:x_1,x_2 \in \mathbb{R}, |x_1-x_2|\leq \delta\}.$$

\noindent The function $f$ is called Dini-continuous if

$$\int_{0}\displaystyle{\frac{\omega_f(t)}{t}}dt<\infty.$$

\noindent We say that a closed Jordan curve $\Gamma$ is Dini-smooth if it has a parametrization $f(\tau)$, $0\leq \tau \leq 2\pi$, such that $f'(\tau)$ is Dini-continuos and $\neq 0$ (see \cite{pommerenke}, section 3.3 for further results).\\

\noindent Recall that for $f:\Omega\rightarrow \Omega'$ a quasiconformal mapping between domains $\Omega$ and $\Omega'$ in $\mathbb{R}^2$, the well known Ghering's result \cite{gehring} ensures that the Jacobian $J_f$ of $f$ satisfies the reverse H{\"o}lder inequality:
\begin{equation}
\label{holder}
\left(\dashint_Q J_f^p\;dx\;dy\right)^{1/p}\leq C~\dashint_Q J_f dx\;dy,
\end{equation}
\noindent for some $p>1$, where $Q$ is a cube in $\Omega$ such that $2Q\subset \Omega$ and where $\dashint_Q$ stands for $\frac{1}{|Q|}\int_Q$.

\section{Proof of Theorem 1}

\noindent We begin this section by proving that the quasicircles defined by quasiconformal mappings with complex dilatation verifying condition 1 or 2 are, indeed, smooth curves. By (\ref{condicion10}) this result is an immediate consequence of the following proposition:\\

\begin{prop}
If $\mu$ verifies (\ref{condicion1}) or (\ref{condicion2}), then there exists $M>0$ such that for all $a\in \mathbb{R}$
\begin{equation*}
\int_{\mathbb{C}}\displaystyle{\frac{|\mu(z)|}{|z-a|}}\displaystyle{\frac{dx\;dy}{|y|}}<M.
\end{equation*}
\end{prop}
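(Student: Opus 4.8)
The plan is to treat the two conditions separately: condition~2 by a one-line Cauchy--Schwarz estimate, and condition~1 by splitting the plane into a region where $|z-a|$ and $|y|$ are comparable and a region where they are not, extracting in the latter the logarithmic weight that appears in (\ref{condicion1}). In both cases the bound comes out uniform in $a$ without extra work, since the one-dimensional integrals below do not depend on $a$ (equivalently, the data $\mu^{*}$ and $\sigma$ are unchanged under the real translations $\mu\mapsto\mu(\cdot+a)$). Fix $R>0$ with $\text{supp}\,\mu\subset\{|z|<R\}$, so that $\mu^{*}(t)=0$ for $|t|\ge R$, and recall that $|\mu(z)|\le\mu^{*}(|\text{Im}(z)|)$ for a.e.\ $z$, which is immediate from the definition of the monotonic majorant.

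For condition~2, fix $y\ne 0$ and apply Cauchy--Schwarz on the line $\text{Im}(z)=y$:
\[
\int_{\mathbb{R}}\frac{|\mu(x+\text{i}y)|}{|z-a|}\,dx \;\le\; \left(\int_{\mathbb{R}}|\mu(x+\text{i}y)|^{2}\,dx\right)^{1/2}\left(\int_{\mathbb{R}}\frac{dx}{(x-a)^{2}+y^{2}}\right)^{1/2}=\sigma(y)\sqrt{\frac{\pi}{|y|}},
\]
using $\int_{\mathbb{R}}\bigl((x-a)^{2}+y^{2}\bigr)^{-1}dx=\pi/|y|$ for every $a\in\mathbb{R}$. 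Dividing by $|y|$ and integrating in $y$ gives $\sqrt{\pi}\int_{\mathbb{R}}\sigma(y)|y|^{-3/2}\,dy$, finite by (\ref{condicion2}) and independent of $a$; this case uses only (\ref{condicion2}).

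For condition~1, bound $|\mu(z)|\le\mu^{*}(|y|)$ and split $\mathbb{C}=A\cup B$ with $A=\{|x-a|\le|y|\}$ and $B=\{|x-a|>|y|\}$. On $A$ we have $|z-a|\ge|y|$ and the slice $\{|x-a|\le|y|\}$ has length $2|y|$, so the $A$-contribution is at most $2\int_{0<|y|<R}\mu^{*}(|y|)\,|y|^{-1}\,dy=4\int_{0}^{R}\mu^{*}(t)\,t^{-1}\,dt$, which is finite because $\log(1/t)\ge1$ near $0$, so (\ref{condicion1}) dominates $\int_{0}\mu^{*}(t)\,t^{-1}\,dt$. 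On $B$ we have $|z-a|\ge|x-a|$, and for fixed $y$ the set $S_{y}=\{x:\,x+\text{i}y\in\text{supp}\,\mu,\ |x-a|>|y|\}$ has measure at most $2R$ and lies in $\{|x-a|\ge|y|\}$; by a standard rearrangement (bathtub) estimate,
\[
\int_{S_{y}}\frac{dx}{|x-a|}\;\le\;2\int_{|y|}^{|y|+R}\frac{dr}{r}=2\log\!\left(1+\frac{R}{|y|}\right)\le 2\log\frac{2R}{|y|}\qquad(|y|<R).
\]
Hence the $B$-contribution is at most $4\log(2R)\int_{0}^{R}\mu^{*}(t)\,t^{-1}\,dt+4\int_{0}^{R}\mu^{*}(t)\,t^{-1}\log(1/t)\,dt$, finite by (\ref{condicion1}). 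Adding the two pieces gives the required $M$, depending only on $R$ and the integral in (\ref{condicion1}).

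The only genuinely delicate step is the $B$-estimate under condition~1: the crude bound $|z-a|^{-1}\le|y|^{-1}$ would force $\int_{0}\mu^{*}(t)\,t^{-2}\,dt<\infty$, which is strictly stronger than (\ref{condicion1}); one must instead use that $|z-a|^{-1}$ is integrable in $x$ across the bounded support of $\mu$ with only a $\log(1/|y|)$ loss, and that this loss is precisely the weight occurring in (\ref{condicion1}). This is exactly where the compact support of $\mu$ enters --- without it the inner $x$-integral diverges. The remaining points --- the identity $\int_{\mathbb{R}}\bigl((x-a)^{2}+y^{2}\bigr)^{-1}dx=\pi/|y|$, the rearrangement bound, and $|\mu(z)|\le\mu^{*}(|\text{Im}(z)|)$ a.e.\ --- are routine.
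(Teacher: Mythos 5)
Your proposal is correct and takes essentially the same route as the paper: for condition 2 the identical Cauchy--Schwarz estimate in $x$ (the paper merely distributes the factor $|y|$ between the two factors, using $\int_{\mathbb{R}}|y|\,|z-a|^{-2}dx\simeq 1$), and for condition 1 the same reduction of the inner $x$-integral over the compact support to a $\log(1/|y|)$ gain, which the paper gets in one line from $|z-a|\simeq|x-a|+|y|$ and a translation/rearrangement of the interval rather than via your explicit split into $\{|x-a|\le|y|\}$ and its complement. Both arguments land on the same $a$-independent one-dimensional integrals $\int_{0}\mu^{*}(t)t^{-1}\log(1/t)\,dt$ and $\int_{\mathbb{R}}\sigma(y)|y|^{-3/2}\,dy$.
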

\begin{proof}
If $\mu$ verifies condition 1 and $\text{supp}(\mu)\subset B(0,R)$, where $B(0,R)$ is the ball centered at 0 and radius $R$ for some $R>0$, then for any $a\in \mathbb{R}$
\begin{eqnarray*}
& &\int_{\mathbb{C}}\displaystyle{\frac{|\mu(z)|}{|z-a|}}\displaystyle{\frac{dx\;dy}{|y|}}\lesssim \int_{-R}^{R}\displaystyle{\frac{\mu^{*}(y)}{|y|}} \left(\int_{-R}^{R}\displaystyle{\frac{1}{|x-a|+|y|}}dx\right) \;dy \lesssim \\
& &\int_{-R}^{R}\displaystyle{\frac{\mu^{*}(y)}{|y|}}\left(\int_{-R}^{R}\displaystyle{\frac{1}{|x|+|y|}}dx\right)\;dy\simeq \int_{0}^{R}\displaystyle{\frac{\mu^{*}(y)}{|y|}}\log{\left(\displaystyle{\frac{1}{|y|}}\right)}dy<\infty.
\end{eqnarray*}

\noindent Let us assume next that $\mu$ verifies (\ref{condicion2}). Then for any $a \in \mathbb{R}$:

\begin{eqnarray}
\label{condicion4}
\int_{\mathbb{C}}\displaystyle{\frac{|\mu(z)|}{|z-a|}}\displaystyle{\frac{dx\;dy}{|y|}}&=&\int_{\mathbb{R}}\displaystyle{\frac{1}{|y|}}\left(\int_{\mathbb{R}}\displaystyle{\frac{|\mu(z)|}{|z-a|}}\;dx\right)dy \nonumber \\
&\leq& \int_{\mathbb{R}}\displaystyle{\frac{1}{|y|}}\left(\int_{\mathbb{R}}\displaystyle{\frac{|\mu(z)|^2}{|y|}}dx\right)^{1/2}\left(\int_{\mathbb{R}}\displaystyle{\frac{|y|}{|z-a|^2}}\;dx \right)^{1/2}dy\\
&\simeq& \int_{\mathbb{R}}\displaystyle{\frac{\sigma(y)}{|y|^{3/2}}}dy<+\infty.
\end{eqnarray}
\end{proof}

\noindent Consider $f \in L^{\infty}(\Gamma)$  so that $C_{\Gamma}(f)\in L^{\infty}(\mathbb{C})$ and let $g=f\circ \rho$ be the pullback of $f$ under the quasiconformal mapping.\\

\noindent Following Semmes' approach as described in the previous section, set $G=C_{\Gamma}(f)\circ \rho$ and $H=G-C_{\mathbb{R}}(g)$. Since $H$ has no jump across $E_0$ and $\mu$ has compact support:

\begin{equation}
\label{condicion9}
H(z_0)=-\displaystyle{\frac{1}{\pi}}\int_{\mathbb{C}}\displaystyle{\frac{\mu(z) \partial G(z)}{z-z_0}}dx\;dy\;\;\;\text{for all} \;\;z_0 \in \mathbb{C}.
\end{equation}

\noindent We will consider the Whitney decomposition associated to $\mathbb{R}_{+}^{2}$ and $\mathbb{R}_{-}^{2}$, that is, $\mathbb{C}\backslash \mathbb{R}=\cup_{k} Q_k$, where the side length of the cube $Q_k$, $l(Q_k)$, is proportional to its distance to $\mathbb{R}$. Denote by $z_k$ the center of the cube $Q_k$.\\

\noindent {\bf Proof for Theorem 1.} Let $Q$ be a cube so that $\text{supp}(\mu)\subset Q$ and suppose that $C_{\Gamma}(f)\in L^{\infty}(\mathbb{C})$. For any $a \in \mathbb{R}$, $|z-a|\simeq |z_k-a|$ for $z\in Q_k$. Also, by the circular distortion theorem, $\delta_{\Gamma}(\rho(z))\simeq \delta_{\Gamma}(\rho(z_k))$ for $z \in Q_k$.\\

\noindent Set $G(z)=C_{\Gamma}(f)\circ \rho$. Since $\partial G(z)=C_{\Gamma}'(f)(\rho(z)) \partial \rho(z)$ and $C_{\Gamma}(f)$ satisfies that $C'_{\Gamma}(f)\lesssim C(\|f\|_{\infty})/\delta_{\Gamma}(\rho(z))$, we get by (\ref{condicion9}):

\begin{equation*}
|H(a)|\lesssim \sum_{k} \displaystyle{\frac{\mu^{*}(3y_k/2)}{|z_k-a|}}\displaystyle{\frac{1}{\delta_{\Gamma}(\rho(z_k))}}\int_{Q_k}|\partial \rho(z)|\;dx\;dy.\nonumber
\end{equation*}

\noindent On the other hand:

\begin{equation}
   \int_{Q_k} |\partial \rho(z)|\;dx\;dy\lesssim \left(\int_{Q_k}|\partial \rho(z)|^2\right)^{1/2}l(Q_k) \simeq \text{diam}(\rho(Q_k))\;l(Q_k). \nonumber
\end{equation}

\noindent Therefore, as the diameter of $\rho(Q_k)$ is comparable to $\delta_{\Gamma}(\rho(z_k))$, with comparison constants depending only on $\Gamma$:

\begin{eqnarray}
\label{condicion5}
|H(a)|&\lesssim & \sum_{k} \displaystyle{\frac{\mu^{*}(3y_k/2)}{|z_k-a|}}\displaystyle{\frac{1}{\delta_{\Gamma}(\rho(z_k))}}\text{diam}(\rho(Q_k))l(Q_k) \nonumber\\
& \simeq& \int_{Q}\displaystyle{\frac{\mu^{*}(y)}{|y|}\displaystyle{\frac{1}{|z-a|}}}\;dx\;dy\simeq \int_{-l(Q)/2}^{l(Q)/2}\displaystyle{\frac{\mu^{*}(t)}{|t|}}\log{\left(\displaystyle{\frac{1}{|t|}}\right)}dt<\infty.\nonumber
\end{eqnarray}

\noindent This proves that $H_{|\mathbb{R}} \in L^{\infty}(\mathbb{R})$ if $\mu$ verifies condition 1.\\

\noindent Consider now that $\mu$ verifies condition 2 and denote by $B_z$ the ball centered at $z$ and radius $C|y|$, where $C$ is the constant given in (\ref{condicion3}).  Then, for any $a \in \mathbb{R}$, by (\ref{condicion9}) and by (\ref{condicion3}):
\begin{equation*}
|H(a)|\lesssim \int_{\mathbb{C}}\left(\dashint_{B_z}|\mu(\omega)|du\;dv\right)\displaystyle{\frac{|\partial G(z)|}{|z-a|}}dx\;dy.
\end{equation*}
\noindent By Fubini's theorem, we get:
\begin{equation*}
|H(a)|\lesssim \int_{\mathbb{C}}\displaystyle{\frac{|\mu(\omega)|}{|w-a|}}\left(\dashint_{{Q}_{\omega}}|\partial G(z)|dx\;dy\right)du\;dv,
\end{equation*}
\noindent where $Q_\omega$ is a cube containing a ball of size comparable to $B_{\omega}$.\\

\noindent As stated before, since $|\partial G(z)|\lesssim |\partial \rho(z)|/\delta_{\Gamma}(\rho(z))$ and, by the circular distortion theorem, $\delta_{\Gamma}(\rho(\omega))\simeq \delta_{\Gamma}(\rho(z))$ for $z \in Q_{\omega}$, then
\begin{equation*}
|H(a)|\lesssim \int_{\mathbb{C}} \displaystyle{\frac{|\mu(\omega)|}{|\omega-a|\delta_{\Gamma}(\rho(\omega))}}\left(\dashint_{Q_{\omega}}|\partial \rho(z)|dx\;dy\right)du\;dv.
\end{equation*}
\noindent On the other hand:
\begin{equation}
   \dashint_{Q_{ \omega}} |\partial \rho(z)|\;dx\;dy\lesssim\left(\int_{Q_{\omega}}|\partial \rho(z)|^2\right)^{1/2}/l(Q_{\omega}) \simeq \text{diam}(\rho(Q_{\omega}))/l(Q_{\omega}). \nonumber
\end{equation}
\noindent Finally, as the diameter of $\rho(Q_{\omega})$ is comparable to its distance to $\Gamma$, with comparison constants depending only on $\Gamma$, and $l(Q_{\omega})\simeq |\text{Im}(\omega)|$, we get by (\ref{condicion4}) that:

\begin{equation*}
|H(a)|\lesssim \int_{\mathbb{C}}\displaystyle{\frac{|\mu(\omega)|}{|\omega-a|}}\displaystyle{\frac{du\;dv}{|v|}}<\infty.
\end{equation*}

\noindent This proves now proves that $H_{|\mathbb{R}} \in L^{\infty}(\mathbb{R})$ if $\mu$ verifies condition 2. \\

\noindent In both cases, as $H=G-C_{\mathbb{R}}(g)$ and $G\in L^{\infty}(\mathbb{C})$, we obtain that $C_{\mathbb{R}}(g)\in L^{\infty}(\mathbb{C})$.\\

\noindent Conversely, if $C_{\mathbb{R}}(g)$ were bounded, the same argument would show that $G$ is bounded on $\mathbb{R}$ and that $C_{\Gamma}(f)\in H^{\infty}(\mathbb{C})$.
\qed

\section{Proof of Theorem 2}

\noindent Before proceeding to the proof, let us recall the notation used in our setting. We consider a domain $\Omega=\mathbb{C}\backslash E$, where $E \subset \Gamma$ is a compact set of positive length contained in a quasicircle $\Gamma=\rho(\mathbb{R})$. We assume that the quasiconformal mapping $\rho:\mathbb{C}\rightarrow \mathbb{C}$ is conformal at $\infty$ and that $\mu_\rho$ satisfies either condition 1 or condition 2. We also define $\Omega_0=\rho^{-1}(\Omega)$, $E_0=\rho^{-1}(E)$, the space $H^{\infty}(\Omega_0,\mu)=\{f\circ \rho: f\in H^{\infty}(\Omega)\}$ and the jump of functions in $H^{\infty}(\Omega)$ and $H^{\infty}(\Omega_0,\mu)$ as in the preliminaries.\\

\noindent Next we state the following lemma that will allow us to relate corona data on $\Omega$ to corona data on the Denjoy domain $\Omega_0$.

\begin{lem}
\label{lema}
Suppose that $\text{supp}(\mu)\subset Q$ for some square $Q$ centered at a real point and that $\mu$ verifies either condition 1 or condition 2. Let $g\in H^{\infty}(\Omega_0,\mu)$ and $\tilde{g}\in H^{\infty}(\Omega_0)$ so that $\text{j}(g)=\text{j}(\tilde{g})$ and set $H=g-\tilde{g}$. Then, for all $z_0 \in \mathbb{C}$, $|H(z_0)|\leq \delta$ if $l(Q)$ is small enough.
\end{lem}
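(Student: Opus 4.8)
The plan is to reduce the estimate on $H=g-\tilde g$ to the Cauchy-integral bound already established in the proof of Theorem 1, and then to track how the relevant quantity scales with $l(Q)$. First I would recall that since $g\in H^\infty(\Omega_0,\mu)$ we have $(\bar\partial-\mu\partial)g=0$ on $\Omega_0$, while $\tilde g\in H^\infty(\Omega_0)$ satisfies $\bar\partial\tilde g=0$ on $\Omega_0$; because $\mathrm{j}(g)=\mathrm{j}(\tilde g)$, the difference $H$ has no jump across $E_0$, so the equation $\bar\partial H=\mu\,\partial g$ holds on all of $\mathbb{C}$ in the sense of distributions. Applying Cauchy's formula exactly as in the preliminaries gives, for every $z_0\in\mathbb{C}$,
\begin{equation*}
H(z_0)=-\frac{1}{\pi}\int_{\mathbb{C}}\frac{\mu(z)\,\partial g(z)}{z-z_0}\,dx\,dy,
\end{equation*}
with the integral effectively taken over $Q\supset\mathrm{supp}(\mu)$.

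Next I would estimate $\partial g$. Writing $g=f\circ\rho$ with $f\in H^\infty(\Omega)$, we have $\partial g(z)=f'(\rho(z))\,\partial\rho(z)$, and since $f$ is bounded analytic on $\Omega=\mathbb{C}\setminus E$ the Schwarz–Pick type bound gives $|f'(\rho(z))|\lesssim \|f\|_\infty/\delta_\Gamma(\rho(z))$. Thus $|\partial g(z)|\lesssim \|f\|_\infty\,|\partial\rho(z)|/\delta_\Gamma(\rho(z))$, which is precisely the pointwise bound on $|\partial G|$ used in the proof of Theorem 1 (with $G$ there playing the role of $g$ here). From this point the argument is verbatim the one already carried out: in the condition~1 case one passes to the Whitney decomposition of $\mathbb{C}\setminus\mathbb{R}$, uses $\delta_\Gamma(\rho(z))\simeq\mathrm{diam}(\rho(Q_k))$ and $\int_{Q_k}|\partial\rho|\lesssim\mathrm{diam}(\rho(Q_k))\,l(Q_k)$ via Cauchy–Schwarz, and arrives at
\begin{equation*}
|H(z_0)|\lesssim \|f\|_\infty\int_{-l(Q)/2}^{l(Q)/2}\frac{\mu^{*}(t)}{|t|}\log\!\left(\frac{1}{|t|}\right)dt;
\end{equation*}
in the condition~2 case one instead invokes (\ref{condicion3}), Fubini, the circular distortion theorem and Cauchy–Schwarz to reach
\begin{equation*}
|H(z_0)|\lesssim \|f\|_\infty\int_{Q}\frac{|\mu(\omega)|}{|\omega-z_0|}\frac{du\,dv}{|v|}\lesssim \|f\|_\infty\int_{-l(Q)/2}^{l(Q)/2}\frac{\sigma(y)}{|y|^{3/2}}\,dy.
\end{equation*}

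The point that makes the lemma work — and the step I expect to be the only genuinely new ingredient beyond Theorem 1 — is that in each case the right-hand side is (a constant times $\|f\|_\infty$ times) the tail of a convergent integral, namely $\int_0 \frac{\mu^{*}(t)}{|t|}\log(1/|t|)\,dt<\infty$ under condition~1 and $\int_{\mathbb{R}}\frac{\sigma(y)}{|y|^{3/2}}\,dy<\infty$ under condition~2. Hence the bound on $|H(z_0)|$ is $o(1)$ as $l(Q)\to 0$, uniformly in $z_0$, and one simply chooses $l(Q)$ small enough that this quantity is $\le\delta$. The main obstacle is really bookkeeping: one must make sure the implicit constants (from the distortion theorems, the reverse Hölder inequality (\ref{holder}), and the Schwarz–Pick bound on $f'$) depend only on $\Gamma$, on $\|\mu\|_\infty$, and on $\|f\|_\infty$, and in particular not on $l(Q)$, so that shrinking $Q$ genuinely drives the estimate to zero. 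Once that uniformity is in place the conclusion $|H(z_0)|\le\delta$ for $l(Q)$ sufficiently small is immediate.
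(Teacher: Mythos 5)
There is a genuine gap: you treat the passage from $z_0\in\mathbb{R}$ to arbitrary $z_0\in\mathbb{C}$ as ``verbatim'' Theorem 1, but that extension is precisely the main content of the lemma's proof, and the Theorem 1 argument does not survive it unchanged. The estimates in Theorem 1 are carried out only for $a\in\mathbb{R}$, and they rely on two facts that fail off the real line: first, $|z-a|\simeq|z_k-a|$ for $z$ in a Whitney cube $Q_k$ (true for real $a$ because $l(Q_k)\simeq\mathrm{dist}(Q_k,\mathbb{R})\leq|z-a|$, false for the Whitney cube $Q_0$ containing a non-real $z_0$); second, the Cauchy--Schwarz step $\int_{Q_k}|\partial\rho|\lesssim(\int_{Q_k}|\partial\rho|^2)^{1/2}l(Q_k)$ pairs $|\partial\rho|\in L^2$ against the kernel, and on the cube containing $z_0$ this would require $\int_{Q_0}|z-z_0|^{-2}\,dx\,dy$, which diverges. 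So your claimed uniform bound over all $z_0$ does not follow from the argument you cite, and the singularity of the Cauchy kernel at a point $z_0$ lying inside $\mathrm{supp}(\mu)\setminus\mathbb{R}$ is left untreated.

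The paper closes exactly this gap. For $z_0\in\mathbb{C}\setminus\mathbb{R}$ it splits the integral over $Q\setminus Q_0$ and $Q\cap Q_0$, where $Q_0$ is the Whitney cube centered at $z_0$. On $Q\setminus Q_0$ one has $|z-x_0|\lesssim|z-z_0|$, which reduces matters to the real-point estimate. On $Q\cap Q_0$ the key new ingredient is Gehring's reverse H\"older inequality (\ref{holder}): it upgrades $|\partial\rho|$ from $L^2$ to $L^{p_0}$ with $p_0=2p>2$, so that the conjugate exponent $q_0<2$ makes $\int_{Q\cap Q_0}|z-z_0|^{-q_0}\,dx\,dy$ finite; in the condition 2 case the pointwise domination (\ref{condicion3}) is also needed to convert the resulting average of $|\mu|$ over $2Q_0$ into the quantity $\int\sigma(y)|y|^{-3/2}\,dy$. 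You do mention (\ref{holder}) in your closing ``bookkeeping'' remark, but only as a source of constants, not as the mechanism that makes the singular part of the integral converge --- which is what it actually is. Your treatment of the case $z_0\in\mathbb{R}$, and the final observation that the bounds are tails of convergent integrals and hence $o(1)$ as $l(Q)\to0$, are correct and agree with the paper.
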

\begin{proof}
\noindent If $\mu$ verified condition 1, and from the proof of Theorem 1, we would get that for all $z_0 \in \mathbb{R}$:
\begin{equation}
|H(z_0)|\lesssim \int_{-l(Q)/2}^{l(Q)/2}\displaystyle{\frac{\mu^{*}(t)}{|t|}}\log{\left(\displaystyle{\frac{1}{|t|}}\right)}dt<\infty.\nonumber
\end{equation}

\noindent Therefore, $|H(z_0)|\leq \delta/2$ for all $z_0\in \mathbb{R}$ if $\text{supp}(\mu)$ is small enough.\\

\noindent Consider now $z_0\in \mathbb{C}\backslash \mathbb{R}$ and let $Q_0$ be the Whitney cube centered at $z_0$. Then

\begin{equation*}
|H(z_0)|\lesssim \int_{Q \backslash Q_0}\displaystyle{\frac{|\mu(z)||\partial g(z)|}{|z-z_0|}}dx\;dy+\int_{Q \cap Q_0}\displaystyle{\frac{|\mu(z)||\partial g(z)|}{|z-z_0|}}dx\;dy.
\end{equation*}

\noindent To estimate the first integral note that, for any $z\notin Q_0$, $|y_0|\lesssim |z-z_0|$ and $|z-x_0|\leq |z-z_0|+|y_0| \lesssim |z-z_0|.$ Then,
\begin{equation*}
\int_{Q \backslash Q_0}\displaystyle{\frac{|\mu(z)||\partial g(z)|}{|z-z_0|}}dx\;dy\lesssim \int_{Q \backslash Q_0}\displaystyle{\frac{|\mu(z)||\partial g(z)|}{|z-x_0|}}dx\;dy\leq \int_{-l(Q)/2}^{l(Q)/2}\displaystyle{\frac{\mu^{*}(t)}{|t|}}\log{\left(\displaystyle{\frac{1}{|t|}}\right)}dt<\delta/2.
\end{equation*}

\noindent To bound the second term consider the exponent $p>1$ in (\ref{holder}) and set $p_0=2p$. Denote $q_0=p_0/(p_0-1)$. Proceeding as in the proof of Theorem 1 we get by H{\"o}lder's inequality and (\ref{holder}) that:

\begin{eqnarray*}
& &\int_{Q\cap Q_0}\displaystyle{\frac{|\mu(z)||\partial g(z)|}{|z-z_0|}}dx\;dy \lesssim \displaystyle{\frac{\mu^{*}(3y_0/2)}{\delta_{\Gamma}(\rho(z_0))}}\int_{Q\cap Q_0} \displaystyle{\frac{|\partial \rho(z)|}{|z-z_0|}}dx\;dy \nonumber\\
&\leq& \displaystyle{\frac{\mu^{*}(3y_0/2)}{\delta_{\Gamma}(\rho(z_0))}}\left(\int_{Q\cap Q_0}|\partial \rho(z)|^{p_0}dxdy\right)^{1/p_0}\left(\int_{Q\cap Q_0}\displaystyle{\frac{1}{|z-z_0|^{q_0}}}dxdy\right)^{\frac{1}{q_0}}\nonumber\\
&\lesssim& \displaystyle{\frac{\mu^{*}(3y_0/2)}{\delta_{\Gamma}(\rho(z_0))}}|Q \cap Q_0|^{1/p_0-1/2}\left(\int_{Q\cap Q_0}|\partial \rho(z)|^{2}dx\;dy\right)^{1/2}\left(\int_{Q\cap Q_0}\displaystyle{\frac{1}{|z-z_0|^{q_0}}}dxdy\right)^{\frac{1}{q_0}} \\\nonumber
&\lesssim& \mu^{*}(3y_0/2) |Q \cap Q_0|^{1/p_0-1/2} \left(\int_{Q\cap Q_0}\displaystyle{\frac{1}{|z-z_0|^{q_0}}}dxdy\right)^{\frac{1}{q_0}}.
\end{eqnarray*}

\noindent But:

\begin{eqnarray*}
\int_{Q\cap Q_0}\displaystyle{\frac{1}{|z-z_0|^{q_0}}}dxdy&\leq&\left(\int_{Q\cap Q_0}\displaystyle{\frac{1}{|z-z_0|^{2q_0}}}dxdy\right)^{1/2}|Q\cap Q_0|^{1/2}\\
&\simeq& |y_0|^{1-q_0}|Q  \cap Q_0|^{1/2}.
\end{eqnarray*}

\noindent  Consider $2l(Q)<1/e$ so that $\log{(1/|y|)}>1$ for $z\in Q$. We then finally get that:\\

\begin{eqnarray*}
\int_{Q\cap Q_0}\displaystyle{\frac{|\mu(z)||\partial g(z)|}{|z-z_0|}}dx\;dy\lesssim \int_{-l(Q)/2}^{l(Q)/2}\displaystyle{\frac{\mu^{*}(t)}{|t|}}\log{\left(\displaystyle{\frac{1}{|t|}}\right)}dt<\delta/2.
\end{eqnarray*}

\noindent Consider now $\mu$ verifying condition 2. From the proof of Theorem 1 and (\ref{condicion4}):
\begin{equation*}
|H(z_0)|\lesssim \int_{\text{supp}(\mu)}\displaystyle{\frac{|\mu(z)|}{|z-z_0|}}\displaystyle{\frac{dxdy}{|y|}}\leq \int_{\mathbb{R}}\displaystyle{\frac{\sigma(y)}{|y|^{3/2}}}dy<\infty
\end{equation*}

\noindent for any $z_0\in \mathbb{R}$. Therefore, and as the last integral does not depend on $z_0$, $|H(z_0)|\leq \delta$ for all $z_0\in \mathbb{R}$ if supp$(\mu)$ is small enough.\\

\noindent For $z_0\in \mathbb{C}\backslash \mathbb{R}$,
\begin{equation}
\label{condicion8}
|H(z_0)|\lesssim \int_{Q \backslash Q_0}\displaystyle{\frac{|\mu(z)||\partial g(z)|}{|z-z_0|}}dx\;dy+\int_{Q \cap Q_0}\displaystyle{\frac{|\mu(z)||\partial g(z)|}{|z-z_0|}}dx\;dy=\text{I}+\text{II},
\end{equation}

\noindent where $Q_0$ is the Whitney cube centered at $z_0$.\\

\noindent Following the same argument as in the previous case and by (\ref{condicion4}):
\begin{equation*}
\text{I}=\int_{Q \backslash Q_0}\displaystyle{\frac{|\mu(z)||\partial g(z)|}{|z-z_0|}}dx\;dy\lesssim \int_{Q \backslash Q_0}\displaystyle{\frac{|\mu(z)||\partial g(z)|}{|z-x_0|}}dx\;dy\leq \int_{-l(Q)/2}^{l(Q)/2}\displaystyle{\frac{\sigma(y)}{|y|^{3/2}}}dy<\delta/2.
\end{equation*}
\\
\noindent Note now that $|\partial g(z)|\lesssim |\partial \rho(z)|/\delta_{\Gamma}(\rho(z))$ and that, by the circular distortion theorem, $\delta_{\Gamma}(\rho(z))\simeq \delta_{\Gamma}(\rho(z_0))$ for $z \in Q_0$. Then, if we apply H{\"o}lder's inequality with exponents $p_0>2$ and $q_0$ as chosen in case 1, the second integral in (\ref{condicion8}) is bounded by

\begin{eqnarray*}
&\text{II}&\leq \displaystyle{\frac{1}{\delta_{\Gamma}(\rho(z_0))}}\left(\int_{Q\cap Q_0}|\mu(z)|^{p_0}|\partial \rho(z)|^{p_0}dx\;dy\right)^{1/p_{0}}\left(\int_{Q\cap Q_0} \displaystyle{\frac{1}{|z-z_0|^{q_0}}}dx\;dy\right)^{1/q_{0}} \nonumber\\
&\simeq& |y_{0}|^{\frac{2-q_0}{q_0}}\displaystyle{\frac{1}{\delta_{\Gamma(\rho(z_0))}}}\left(\int_{Q\cap Q_0}|\mu(z)|^{p_0}|\partial \rho(z)|^{p_0}dx\;dy\right)^{1/p_{0}}.
\end{eqnarray*}

\noindent But by (\ref{condicion3}) and as the Jacobian of $\rho$ satisfies the reverse H{\"o}lder inequality  (\ref{holder}):
\begin{equation*}
\left(\int_{Q \cap Q_0}|\mu(z)|^{p_0}|\partial \rho(z)|^{p_0}dx\;dy\right)^{1/p_{0}}\lesssim \left(\dashint_{2Q_{0}}|\mu(z)|dx\;dy\right)|Q\cap Q_{0}|^{1/p_{0}-1/2}\text{diam}(\rho(Q_0)),
\end{equation*}
\noindent and:
\begin{eqnarray*}
& \text{II}& \lesssim |y_{0}|^{\frac{2-q_0}{q_0}}\cdot |y_{0}|^{\frac{2}{p_0}-1}\left(\dashint_{2Q_{0}}|\mu(z)|dx\;dy\right)=\dashint_{2Q_{0}}|\mu(z)|dx\;dy\\
&\simeq& \displaystyle{\frac{1}{|2Q_{0}|}}\int_{y_0-l(Q_0)}^{y_0+l(Q_0)}\left(\int_{x_0-l(Q_0)}^{x_0+l(Q_0)}|\mu(z)|dx\right)dy \lesssim  \displaystyle{\frac{1}{y_0^2}}\int_{y_0-l(Q_0)}^{y_0+l(Q_0)} \sigma(y)\cdot l(Q_0)^{1/2}dy \\
&\simeq& \int_{y_0-l(Q_0)}^{y_0+l(Q_0)}\sigma(y)/|y|^{3/2}dy\leq \int_{-l(Q)/2}^{l(Q)/2}\sigma(y)/|y|^{3/2}dy < \delta/2
\end{eqnarray*}
\noindent as long as $Q$ is small enough.
\end{proof}

\noindent We now prove Theorem 2 for the two settings. We will follow the same steps as in \cite{nuestro} but for the sake of completeness we will reproduce all the details.

\begin{thm2}
Let $f_1,f_2,\ldots, f_n \in H^{\infty}(\Omega)$ so that $\delta \leq \max_{k}|f_{k}(\omega)|\leq 1$ for all $\omega \in \Omega$ and some $\delta>0$. Then, there exist $g_1,g_2,\ldots,g_n \in H^{\infty}(\Omega)$ such that $f_1 g_1+f_2 g_2+\ldots+f_n g_n=1$ on $\Omega$.
\end{thm2}
\begin{proof}
\noindent Gamelin proved that it is sufficient to solve the corona problem locally \cite{Gamelin2}, i.e., that for any $\zeta \in \Gamma$ there exists a neighborhood of $\zeta$ on which it is true and such that the size of the neighborhood is determined by $\delta$, $n$ and other parameters concerning $\Gamma$ (see also \cite{garnett}, page 358).\\
\noindent  We can then assume that $\mu(z)=0$ outside a small enough square centered at a real point.\\

\noindent Let $f_{k}^{*}=f_{k}\circ \rho$ be quasiregular functions defined on $\Omega_0$. Then, the jump of $f_{k}^{*}$, $\text{j}(f_{k}^{*})$, is indeed, the pullback of $\text{j}(f_{k})$ under the mapping $\rho$, that is, $\text{j}(f_{k}^{*})=\text{j}(f_{k})\circ \rho$, where $\text{j}(f_{k})$ is the jump of $f_{k}$ across E. Note that $f_1^{*},\ldots, f_n^{*} \in H^{\infty}(\Omega_0,\mu).$\\

\noindent Set the analytic functions $\tilde{f}_{k}=C_{\mathbb{R}}(\text{j}(f_{k}^{*}))$. By theorem 1, $\tilde{f}_k\in H^{\infty}(\Omega_0)$. To show that $\{\tilde{f}_k\}$ are indeed corona data, define $H_{k}=f_{k}^{*}-\tilde{f}_{k}$ and fix $z_0 \in \Omega_0$. Then, there exists $1\leq j \leq n$ such that $\delta \leq |f_j^{*}(z_0)|\leq |H(z_0)|+|\tilde{f}_j(z_0)|$. By lemma 1, $|H(z_0)|\leq \delta/2$ if $\text{supp}(\mu)$ is sufficiently small and, therefore, $\delta/2\leq |\tilde{f}_j(z_0)|$.\\

\noindent According to Garnett and Jone's theorem for Denjoy domains \cite{garnettandjones}, there exist $\tilde{p}_1,\ldots,\tilde{p}_n\in H^{\infty}(\Omega_0)$ such that $\tilde{f}_1 \tilde{p}_1+\ldots+\tilde{f}_n \tilde{p}_n=1$ on $\Omega_0$ with $\|\tilde{p}_{k}\|_{\infty}\leq C(n,\delta)$.\\

\noindent Define $P_{k}^{*}=\text{j}(\tilde{p}_{k})$. Then, $P_{k}^{*}\in L^{\infty}(\mathbb{R})$ and $\tilde{p}_{k}=C_{\mathbb{R}}(P_{k}^{*})$. Set $P_{k}=P_{k}^{*}\circ \rho^{-1}$ on $\Gamma$ and define the bounded analytic functions $p_{k}=C_{\Gamma}(P_{k})$ on $\Omega$.\\

\noindent Although $\{p_{k}\}\in H^{\infty}(\Omega)$ with $\|p_{k}\|_{\infty}\leq C(n,\delta,\Gamma)$ by theorem 1, they are not corona solutions as they do not verify that $\sum f_{k} p_{k}=1$ on $\Omega$.\\

\noindent Consider the functions $g_{k}(\omega)=p_{k}(\omega)/(\sum_{j}f_j(\omega) p_j(\omega))$, $1\leq k \leq n$, on $\Omega$. They clearly satisfy that $\sum_{k}f_{k} g_{k}=1$. To prove they are, indeed, corona solutions, it is sufficient to show that $\sum_{j}f_{j} p_{j}$ is close to 1 and therefore bounded away from 0.\\

\noindent Let us denote $p_{k}^{*}=p_{k}\circ \rho \in H^{\infty}(\Omega_0,\mu)$. Note that, again, $\text{j}(p_{k}^{*})=\text{j}(\tilde{p}_{k})=P_{k}^{*}$. Consider $\omega \in \Omega$ and $z \in \Omega_0$ so that $\omega=\rho(z)$. Then

\begin{eqnarray}
& & |\sum_{j=1}^{n}f_{j}(\rho(z))p_{j}(\rho(z))-1|=  |\sum_{j=1}^{n}f_{j}(\rho(z))p_{j}(\rho(z))-\sum_{j=1}^{n}\tilde{f}_{j}(z)\tilde{p}_{j}(z)|\nonumber\\
&\leq& \sum_{j=1}^{n}|f_{j}^{*}(z)||p_{j}^{*}(z)-\tilde{p}_{j}(z)|+\sum_{j=1}^{n}|\tilde{p}_{j}(z)||f_{j}^{*}(z)-\tilde{f}_{j}(z)|\nonumber
\end{eqnarray}

\noindent is small enough as $f_{j}^{*}$ and $\tilde{p}_{j}$ are bounded and $|p_{j}^{*}(z)-\tilde{p}_{j}(z)|$, $|f_{j}^{*}(z)-\tilde{f}_{j}(z)|$ are also small enough due to lemma 1.
\end{proof}

\section{An example of a smooth but not Dini-smooth curve}

\noindent In this section, we provide an example of a smooth quasicircle $\Gamma=\rho(\mathbb{R})$ with $\mu_\rho$ satisfying condition 2 and such that $\Gamma$ is not a Dini-smooth curve.\\

\noindent Let $h$ be the conformal map taking $\mathbb{D}$ onto the ball $B(9/10,1/10)$, $h(z)=(9+z)/10$. Consider:

$$g(z)=2z+\displaystyle{\frac{1-z}{\log{(1-z)}}},$$

\noindent and set $f=g\circ h$. Then $f$ defines an analytic function on $\mathbb{D}$.\\

\noindent Since $f'\neq 0$ in $\mathbb{D}$, then $f$ is locally univalent. Also, $(1-|z|^2)|z f''(z)/f'(z)|\leq 1$ and Becker's univalence criteria (\cite{pommerenke}, Theorem 1.11) shows that f is indeed an univalent function.\\

\noindent A simple computation shows $\overline{\lim}_{z\rightarrow 1}(1-|z|)|f''(\bar{z})|/|f'(\bar{z})|<1$. Therefore, by Becker and Pommerenke result \cite{becker2}:

\begin{equation}
f(z)=f(1/\bar{z})+f'(1/\bar{z}) (z-1/\bar{z}),\;\; \text{for}\;\; |z|>1.\nonumber
\end{equation}

\noindent defines a quasiconformal extension of $f$ in a neighbourhood of the unit circle and

$$\left|\mu(1/\bar{z})\right|\asymp(1-|z|)\displaystyle{\frac{|f''(z)|}{|f'(z)|}}\; \; \text{for} \; z\in \mathbb{D},$$

\noindent where
\begin{equation}
\label{derivada}
f'(z)=\displaystyle{\frac{1}{10}}\left( 2-\displaystyle{\frac{1}{\log{(1-h(z))}}}+\displaystyle{\frac{1}{(\log{(1-h(z))})^2}}\right) \nonumber
\end{equation}

\noindent and
\begin{equation*}
|f''(z)|\simeq \displaystyle{\frac{1}{|\log{(1-h(z))}|^2 |1-h(z)|}}\simeq \displaystyle{\frac{1}{|\log{\frac{10}{1-z}}|^2\;|1-z|}}
\end{equation*}

\noindent in $\mathbb{D}$.\\

\noindent Consider polar coordinates $z=re^{\text{i}\theta}$. As for $r>1$, $|z-1|\simeq \theta +r-1$, then:

\begin{equation*}
|\mu(re^{\text{i}\theta})|\lesssim (r-1)\displaystyle{\frac{1}{\left(\log{\frac{10}{|1-z|}}\right)^2}}\displaystyle{\frac{1}{|1-z|}},
\end{equation*}

\noindent and a simple calculation yields $\sigma(r)\lesssim (r-1)^{1/2}/(\log{\frac{1}{r-1}})^2$, when $r \rightarrow 1^{+}.$\\

\noindent Then
\begin{equation*}
\int_{1}\displaystyle{\frac{\sigma(r)}{(r-1)^{3/2}}}dr\lesssim \int_{1}\displaystyle{\frac{1}{(r-1)(\log{\frac{1}{r-1}})^2}}dr<\infty.
\end{equation*}
\\
\noindent Since estimate (\ref{condicion3}) obviously holds for $\mu$, we have proved that $\mu$ satisfies condition 2.\\

\noindent On the other hand, the modulus of continuity of $f'$ verifies that $\omega_{f'}(t)\simeq 1/\log{(1/t)}$ and

\begin{equation}
\int_{0}\displaystyle{\frac{1}{t \log{(1/t)}}}\;dt=\infty.\nonumber
\end{equation}

\noindent Therefore, see [\cite{pommerenke2}, Theorem 3.5] the curve $\Gamma=f(\mathbb{T})$ is not Dini-smooth.

\end{document}